\newcommand{\R}{\mathbb{R}}
\newcommand{\EE}{\mathbb{E}}
\newcommand{\III}[1]{{\left\vert\kern-0.25ex\left\vert\kern-0.25ex\left\vert #1 
    \right\vert\kern-0.25ex\right\vert\kern-0.25ex\right\vert}}
             \newcommand{\cn}{\mathcal{N}}           
\newcommand{\threequals}{\equiv}
\newcommand{\ed}{\stackrel{d}{=}} 
\newcommand{\argmin}{\textrm{argmin}}
\theoremstyle{plain}
\newtheorem{theorem}{Theorem}
\newtheorem{corollary}[theorem]{Corollary}
\newtheorem{remark}[theorem]{Remark}
\newtheorem{lemma}[theorem]{Lemma}
\providecommand*{\diff}%
	{\@ifnextchar^{\DIfF}{\DIfF^{}}}
\def\DIfF^#1{%
	\mathop{\mathrm{\mathstrut d}}%
		\nolimits^{#1}\gobblespace}
\def\gobblespace{%
	\futurelet\diffarg\opspace}
\def\opspace{%
	\let\DiffSpace\!%
	\ifx\diffarg(%
		\let\DiffSpace\relax
	\else
		\ifx\diffarg[%
			\let\DiffSpace\relax
	\else
		\ifx\diffarg\{%
			\let\DiffSpace\relax
		\fi\fi\fi\DiffSpace}
\newcommand{\cm}{\mathcal{M}} 
\newcounter{rcnt}[section]
\def\qt#1{\qquad\text{#1}}
\numberwithin{equation}{section}
\numberwithin{theorem}{section}
\begin{document}

\title{Distribution-free properties of isotonic regression}  
\author{Jake A. Soloff ~~~ Adityanand Guntuboyina\thanks{Supported by
  NSF CAREER Grant DMS-16-54589} ~~~ Jim Pitman}  
\date{
	Department of Statistics, University of California, Berkeley \\~\\
	\today
}

\maketitle

\begin{abstract}
It is well known that the isotonic least squares estimator is characterized as the derivative of 
the greatest convex minorant of a random walk. Provided the walk has exchangeable 
increments, we prove that the slopes of the greatest convex minorant are distributed as order 
statistics of the running averages. This result implies an exact non-asymptotic formula for the 
squared error risk of least squares in isotonic regression when the true sequence is constant 
that holds for every exchangeable error distribution.
\end{abstract}

\section{Introduction}\label{sec-intro}

Isotonic regression refers to the problem of estimating a monotone sequence $\theta^*_1\le 
\cdots\le \theta_n^*$ based on a noisy observation vector $Y$ which is assumed to be an
additive perturbation of $\theta^* = (\theta_1^*,\dots, \theta^*_n)$, 
\begin{equation*}
  Y = \theta^* + \sigma Z,
\end{equation*}
where the components $Z_1, \dots, Z_n$ of $Z$ are assumed to have zero
mean and unit variance. It is commonly assumed that $Z_1, \dots, Z_n$
are independent and identically distributed (i.i.d.) but we work with the more general assumption of exchangeability in this paper. A natural estimator for $\theta^*$ in
this setting is the isotonic Least Squares Estimator (LSE), defined as
\begin{equation*}
  \hat{\theta} := \Pi_{\cm^n}(Y) := \argmin_{\theta \in \cm^n} \|Y -
  \theta\|_2^2,
\end{equation*}
where $\|\cdot\|_2$ denotes the usual Euclidean norm on $\R^n$ and 
$\cm^n := \{\theta\in \R^n : \theta_1\le \cdots\le \theta_n\}$ is the monotone cone of length $n$ 
non-decreasing sequences. As $\cm^n$ is a closed convex cone, $\hat{\theta}$ as defined
above exists uniquely; it can also be computed in $O(n)$ time by the pool adjacent
violators algorithm~\cite{brunk1972statistical, grotzinger1984projections}.

The statistical properties of $\hat{\theta}$ are typically studied in terms of the risk or the
normalized mean squared error:  
\begin{equation*}
  R(\hat{\theta}, \theta^*) := \frac{1}{n} \EE_{\theta^*}
  \|\hat{\theta} - \theta^*\|_2^2. 
\end{equation*}
A key quantity in understanding $R(\hat{\theta}, \theta^*)$ is 
\begin{equation*}
  \delta_n(\mu) := \EE_{Z\sim \mu} \|\Pi_{\cm^n}(Z)\|_2^2,
\end{equation*}
where $\mu$ denotes the law of the noise vector $Z$. Indeed, it is clear that 
\[
 \frac{n}{\sigma^2} R(\hat{\theta}, \theta^*) = \delta_n(\mu) ~~~~~~~~~~~~~~~~~~~~\qt{when
 $\theta_1^* = \dots = \theta_n^*$}.
\]
When $\theta_1^*\le \cdots\le \theta_n^*$ are not all equal, let $(A_1,\dots,A_k)$ be the finest
partition of $\{1,\dots,n\}$ such that $\theta^*$ is constant on each $A_i$. It has been shown
~\cite{oymak2016sharp, fang2017risk, bellec2018sharp} that 
\begin{align}\label{eq-sharpmse}
\frac{n}{\sigma^2} R(\hat{\theta},
  \theta^*) 
  \begin{cases} 
   \leq \delta_{n_1}(\mu_{A_1}) + \dots + \delta_{n_k}(\mu_{A_k}) & \text{for
     every } \sigma > 0 \\
   \rightarrow    \delta_{n_1}(\mu_{A_1}) + \dots + \delta_{n_k}(\mu_{A_k}) & \text{as } \sigma \downarrow 0 
  \end{cases},
\end{align}
where $\mu_{A_i}$ denotes the marginal distribution of $(Z_j)_{j\in A_i}$ and $n_i = |A_i|$ is
the length of the $i^\text{th}$ block for all $i=1,\dots,k$. We emphasize that 
\eqref{eq-sharpmse} holds for arbitrarily dependent $Z_1,\dots, Z_n$ with zero mean and finite 
variance. It was also shown in \cite{bellec2018sharp} that $\delta_n(\mu)$ also bounds the risk 
of the isotonic LSE in misspecified settings where $\theta^*$ does not lie in $\mathcal{M}^n$.

The quantity $\delta_n(\mu)$ therefore crucially controls the risk of the isotonic LSE. The goal 
of this paper is to explicitly determine $\delta_n(\mu)$ for every $n \geq 1$ under the additional 
assumption that $Z$ is exchangeable. Specifically, under the assumption of exchangeability, we 
show in Corollary \ref{cor-stat-dim} that, for all $n$,
\begin{equation}\label{om}
  \delta_n(\mu) = \rho n + (1-\rho)H_n,
\end{equation}
where $H_n := 1 + \frac{1}{2} + \dots + \frac{1}{n}$ is the $n^\text{th}$ harmonic number and 
$\rho = \text{Cor}(Z_1,Z_2)$ is the pairwise correlation. Combined with \eqref{eq-sharpmse}, 
our result provides a sharp non-asymptotic bound on the risk of isotonic regression for {\sl any} 
exchangeable noise vector. In the special case when $Z_1,\dots,Z_n$ are i.i.d. with zero mean 
and unit variance, $\rho = 0$ and thus \eqref{om} gives:
\begin{equation}\label{om-ind}
  \delta_n(\otimes_{i=1}^n\eta) = H_n \qt{for every probability
      measure $\eta$}.
\end{equation}
Here $\eta$ is the common distribution of the independent variables $Z_1,\dots,Z_n$.

Previously, the formula~\eqref{om-ind} was known when $\eta$ is the
standard Gaussian probability measure on $\R^n$. This was observed by Amelunxen et 
al.~\cite{amelunxen2014living} who proved it by observing first that when 
$\mu = \otimes_{i=1}^n \eta$ and $\eta$ is the standard Gaussian measure, the formula
\begin{equation}\label{genk}
 \EE \|\Pi_K(Z)\|_2^2 = \sum_{k=0}^n k\,\nu_k(K) 
\end{equation}
holds for every closed convex cone $K \subseteq \R^n$ where
$\nu_k(K)$ is the $k^\text{th}$ intrinsic volume of $K$. When $K = \cm^n$ is the monotone 
cone, the right hand side in equation~\eqref{genk} can be shown to be equal to $H_n$ 
by using the fact that the generating function $s \mapsto \sum_{k=0}^n s^k \nu_k(\cm^n)$ can 
be computed in closed form. Amelunxen et al.~\cite{amelunxen2014living} used the theory of 
finite reflection groups~\cite{coxeter2013generators} to obtain the exact expression for this 
generating function. However, the exact expression for $\sum_{k=0}^n s^k \nu_k(\cm^n)$ can 
already be found in the classical literature on isotonic regression (see Theorem 2.4.2 in 
Roberston et al.~\cite{robertson1988order} and references therein). 

The above proof does not work for non-Gaussian $\eta$ mainly
because the expression~\eqref{genk} does not hold for general $\eta$. In fact,
the best available result on $\delta_n(\otimes_{i=1}^n\eta)$ for non-Gaussian $\eta$ is
in equation (2.11) of Zhang~\cite{zhang2002risk}, who proved the asymptotic result:
\begin{equation*}
  \delta_n(\otimes_{i=1}^n\eta) = (1 + o(1))(1 + \log n) \qt{as $n \rightarrow
    \infty$}. 
\end{equation*}
This bound gives the right behavior as the right hand side of
equation~\eqref{om-ind}  but only as $n \rightarrow \infty$.  We
improve this result by proving for every $n \ge 1$ that $\delta_n(\otimes_{i=1}^n \eta)$ is always 
equal to the $n^\text{th}$ harmonic number $H_n$ for every probability measure $\eta$ having 
mean $0$ and variance $1$.

We prove \eqref{om} by developing a precise characterization of the marginal distribution of 
each individual component $(\Pi_{\mathcal{M}^n}(Z))_k$ of $\Pi_{\mathcal{M}^n}(Z)$. 
Specifically, as long as $Z$ is exchangeable, we show in Theorem
\ref{thm-main} that $(\Pi_{\mathcal{M}^n}(Z))_k$ has the same
distribution as $\bar{Z}_{(k)}$, the  $k^\text{th}$ order statistic of
the running averages $\bar{Z}_j = \frac{Z_1+\dots+Z_j}{j}$. We prove
Theorem \ref{thm-main} in Section \ref{sec-main}, using a
characterization of the components of the isotonic LSE as the
left-hand slopes of the greatest convex minorant of the random walk with increments
$Z_1,\dots,Z_n$. This result and its continuous-time analogue may be
of independent interest outside the study of isotonic regression, so
in Section \ref{sec-main} we also address consequences for the
greatest convex minorant of a stochastic process with exchangeable
increments. The order statistics of the running averages
$\{\bar{Z}_k\}_{k=1}^n$ can be fairly complicated even when $Z$ is
Gaussian; however, Theorem~\ref{thm-main} easily implies results such
as~\eqref{om}. In Section~\ref{sec-consequences}, we detail some risk
calculations for isotonic regression and its variants which all follow
from Theorem~\ref{thm-main}.   

\section{Main Result}\label{sec-main}

Let $S_k=~\sum_{i=1}^k Z_i$ denote the partial sums for $k=1,\dots,n$, started at $S_0 = 0$. 
Identify the random walk $\{S_k\}_{k=0}^n$ with its {\sl cumulative sum diagram} (CSD) 
$S : [0,n]\to\R$, where $S(k) = S_k$ for integers $k=0,\dots,n$ and linearly interpolated 
between integers. Let $C : [0,n]\to\R$ denote the {\sl greatest convex minorant} (GCM) of $S$, 
i.e. the greatest convex function that lies below $S$. See Figure~\ref{fig-gcm} for a depiction of 
the GCM of the CSD. With this notation, we now recall the graphical representation of the 
isotonic LSE as given in Theorem 1.2.1 of Roberston et al.~\cite{robertson1988order}.

\begin{lemma}\label{lem-characterization} For any vector $Z$, the isotonic LSE 
$\Pi_{\cm^n}(Z)$ is given by the left-hand slopes of the greatest convex minorant of the 
cumulative sum diagram. For all $k=1,\dots,n$
\[
\left(\Pi_{\cm^n}(Z)\right)_k  = C(k) - C(k-1)= \partial_- C(k).
\]
\end{lemma}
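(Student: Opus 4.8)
The plan is to prove the characterization of the isotonic LSE as the left-hand slopes of the GCM by exploiting the variational (KKT) conditions that characterize the projection onto the closed convex cone $\cm^n$.
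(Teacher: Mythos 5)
The paper does not actually prove this lemma; it simply recalls it from Theorem~1.2.1 of Robertson et al., so there is no in-paper argument to compare against. Your proposal, however, is not a proof at all: it is a single sentence announcing an intended strategy. Nothing is verified. That is the gap --- the entire argument is missing.

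The strategy you name is a sound and standard one, so let me spell out what you would actually have to do. Since $\cm^n$ is a closed convex cone, $\hat\theta=\Pi_{\cm^n}(Z)$ is characterized by: (a) $\hat\theta\in\cm^n$, (b) $\<Z-\hat\theta,\hat\theta\>=0$, and (c) $\<Z-\hat\theta,\theta\>\le 0$ for all $\theta\in\cm^n$. Writing $\hat S_k=\sum_{i\le k}\hat\theta_i$ and using Abel summation, condition (c) for all nondecreasing $\theta$ is equivalent to $\hat S_k\le S_k$ for $k=1,\dots,n-1$ together with $\hat S_n=S_n$, and condition (b) becomes $\sum_{k=1}^{n-1}(S_k-\hat S_k)(\hat\theta_{k+1}-\hat\theta_k)=0$, i.e.\ $\hat S_k=S_k$ at every $k$ where $\hat\theta_k<\hat\theta_{k+1}$. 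You must then check that the candidate $\hat\theta_k=C(k)-C(k-1)$ satisfies all of these: monotonicity of the slopes follows from convexity of $C$; $\hat S_k=C(k)\le S_k$ because $C$ minorizes $S$; $C(0)=0$ and $C(n)=S_n$ because otherwise $C$ could be raised at an endpoint, contradicting maximality; and $C(k)=S_k$ at every integer $k$ where the left and right slopes of $C$ differ, since a kink of the greatest convex minorant of a piecewise linear function must be a contact point (otherwise the kink could be smoothed upward). Each of these steps is short but none is stated in your proposal, and the last two (endpoint contact and contact at kinks) are exactly where the geometry of the GCM enters; without them the KKT conditions cannot be verified. As written, the proposal cannot be accepted as a proof.
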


\begin{figure}[h!]
\centering
\includegraphics[scale=.25]{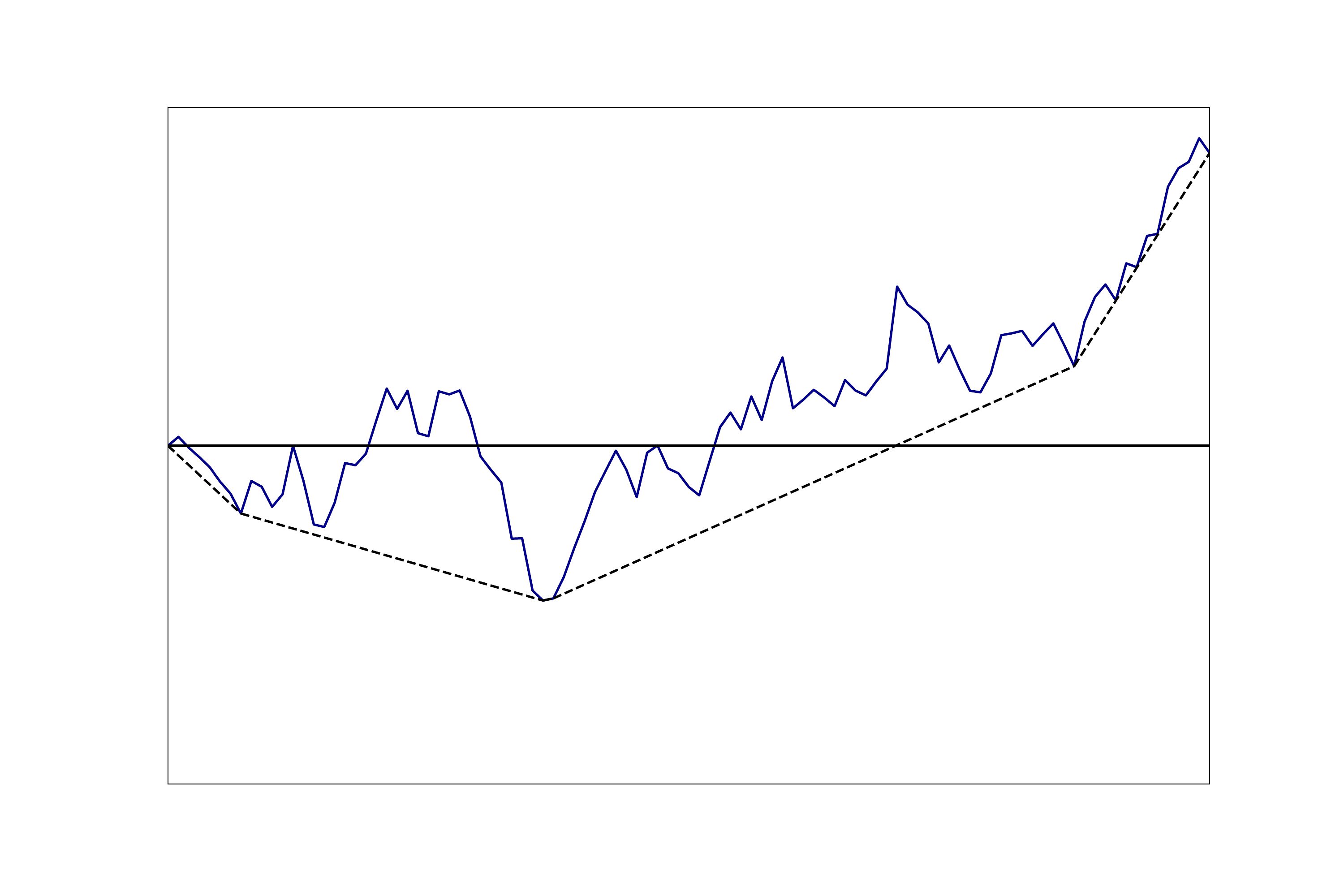}
\caption{Solid blue curve is the cumulative sum diagram $S$ of increments $Z_1,\dots,Z_n$; 
dashed black curve is the greatest convex minorant $C$ of $S$.}\label{fig-gcm}
\end{figure}

For the remainder of this section let 
\begin{align}
\Delta_k := \partial_- C(k)= \min_{k\le v \le n} \max_{0\le u < k} \frac{S_v - S_{u}}{v - u}
\end{align}  
denote the left-hand slope of the GCM at $k$, so $\Delta = (\Delta_1,\dots, \Delta_n)$ is equal 
to $\Pi_{\cm^n}(Z)$ by the lemma. In particular, when $k=1$ we have $\Delta_1 = \min_{1\le 
v\le n}\frac{S_v}{v}$. When $k=n$, we have $\Delta_n = \max_{0\le u<n}\frac{S_n-S_u}{n-u}$, 
and if $(Z_n,\dots, Z_1)\ed (Z_1,\dots, Z_n)$ then 
$\Delta_n\ed  \max_{1\le u\le n}\frac{S_u}{u}$. Our next result generalizes this observation, 
showing that the $k^\text{th}$ slope $\Delta_k$ is equal in distribution to the $k^\text{th}$ 
smallest running average if $Z$ is exchangeable.

\begin{theorem}\label{thm-main} Suppose $Z = (Z_1,\dots,Z_n)$ is exchangeable. Let 
$\bar{Z}_k := \frac{1}{k}\sum_{i=1}^k Z_i$ denote the $k^\text{th}$ running average for 
$k=1,\dots,n$ and let $\bar{Z}_{(1)}\le\cdots \le \bar{Z}_{(n)}$ denote their order statistics. Then
\begin{align}\label{eq-dist-identity}
\Delta_k
\stackrel{d}{=} \bar{Z}_{(k)}
\end{align}
marginally for all $k=1,\ldots,n$. 
\end{theorem}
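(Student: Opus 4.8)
The plan is to pass from the marginal law of each individual slope to the joint \emph{counting} behaviour of all slopes, and then to recognize the resulting identity as an instance of Sparre Andersen's fluctuation theory. Since $C$ is convex, its left-hand slopes are non-decreasing, $\Delta_1 \le \cdots \le \Delta_n$, so for every threshold $t \in \R$ the event $\{\Delta_k \le t\}$ coincides exactly with $\{N^\Delta(t) \ge k\}$, where $N^\Delta(t) := \#\{j : \Delta_j \le t\}$ counts the slopes not exceeding $t$. The order statistics satisfy the same tautology: $\{\bar Z_{(k)} \le t\} = \{N^{\bar Z}(t) \ge k\}$ with $N^{\bar Z}(t) := \#\{j : \bar Z_j \le t\}$. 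Consequently $P(\Delta_k \le t) = P(N^\Delta(t) \ge k)$ and $P(\bar Z_{(k)} \le t) = P(N^{\bar Z}(t) \ge k)$, so it suffices to prove, for each fixed $t$, the single distributional identity $N^\Delta(t) \ed N^{\bar Z}(t)$; the claimed marginal identity \eqref{eq-dist-identity} then follows by varying $t$.

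Next I would identify both counts with functionals of a tilted walk. Subtracting the line $x \mapsto tx$ from the CSD produces $\tilde S_j := S_j - tj = \sum_{i\le j}(Z_i - t)$, a random walk whose increments $Z_i - t$ remain exchangeable and whose GCM is $C - t\,(\cdot)$ with left-slopes $\Delta_j - t$. On the one hand, $N^{\bar Z}(t) = \#\{1 \le j \le n : \tilde S_j \le 0\} =: L$, the number of non-positive partial sums. On the other hand, because the shifted GCM is convex and touches $\tilde S$ at its minimum, its non-positive slopes occupy an initial block whose length equals the (last) location of the minimizer of $\tilde S$; thus $N^\Delta(t) = \max\{j : \tilde S_j = \min_{0\le i \le n} \tilde S_i\} =: M$. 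The problem therefore reduces to showing $M \ed L$ for the exchangeable walk $\tilde S$.

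This last identity is exactly Sparre Andersen's theorem: for a walk with exchangeable increments, the position of the extremum has the same law as the number of partial sums lying on the corresponding side of the origin. I would establish it by the route that is most natural under exchangeability, namely conditioning on the unordered multiset $\{Z_1 - t, \dots, Z_n - t\}$ --- on which $(Z_1, \dots, Z_n)$ is uniform over all $n!$ relabelings --- and then applying the cycle lemma to the fixed multiset: summing $M$ and $L$ over the $n$ cyclic shifts of any fixed arrangement yields matching totals, which forces $M \ed L$ after averaging over the uniform permutation.

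The delicate point is the bookkeeping of ties: ensuring that the non-strict count $\{\tilde S_j \le 0\}$ matches the non-positive-slope count with the correct choice of \emph{last} minimizer, so that the boundary cases $\tilde S_j = 0$ and zero GCM slopes are attributed consistently. For continuous noise these coincidences have probability zero and the argument is immediate; the general exchangeable case (including atomic $Z$) is where care is needed, and the cycle-lemma formulation is what makes the tie-handling uniform, since it operates at the level of a single fixed increment sequence rather than relying on genericity.
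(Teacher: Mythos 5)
Your argument is structurally identical to the paper's proof: you reduce the general threshold $t$ to $t=0$ by tilting the increments (the paper phrases this as replacing $Z_i$ by $Z_i - z$, which is the same operation as subtracting the line $x\mapsto tx$ from the CSD), you identify the number of non-positive GCM slopes with the last argmin $M$ of the walk and the number of non-positive running averages with the count $N$ of non-positive partial sums, and you invoke the Sparre Andersen equivalence $M \ed N$ for exchangeable increments. The only place you depart from the paper is that the paper simply cites this equivalence (Corollary 11.14 of Kallenberg), whereas you propose to prove it by a cyclic-shift argument --- and that step, as you describe it, does not work. Summing $M$ and $L$ over the $n$ cyclic shifts of a fixed arrangement does not in general yield matching totals: for the increments $(2,-1,-3)$ the three cyclic shifts give $M$-values $3,2,1$ (total $6$) but $L$-values $1,3,3$ (total $7$). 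The equidistribution of $M$ and $L$ holds only after averaging over all $n!$ permutations of the multiset (for this example the six permutations give the matching multisets $\{1,2,2,3,3,3\}$ for both), and the standard proof is by induction on $n$ rather than by the cycle lemma; and even if the totals did match, equal sums would not by themselves force equality of distributions. This is a repairable defect rather than a conceptual one --- the identity $M \ed N$ is a classical citable result, and everything else in your reduction (including your care with ties and with the choice of \emph{last} minimizer, which the paper handles the same way) is sound.
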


\begin{proof} As before, let $S_k$ denote the $k^\text{th}$ partial sum. Let $M$ be the last 
argmin of the sequence $\{S_i\}_{i=0}^n$, and let $N$ be the amount of time the walk is non-positive
$N:= \sum_{i = 1}^n 1 ( S_i \le 0 )$. We will use Corollary 11.14 of Kallenberg 
\cite{kallenberg2006foundations}, due to Sparre-Andersen, which says $M \ed N$ as long as 
$Z$ is exchangeable.

Note that the slope of the GCM switches from non-positive to positive at time $M$, since the 
horizontal line with intercept $S_M$ minorizes the GCM and touches it at time $M$. Hence, no 
matter the sequence of increments $Z_i$, there is the identity of events
\begin{align}\label{eq-min-id}
(\Delta_k \le 0 ) =  (M \ge k ).
\end{align}
Also, for the time $N$ that the walk is non-positive, since $S_i \le 0$ if and only if 
$\bar{Z}_i\le 0$, there is the identity of events 
$$
(\bar{Z}_{(k)} \le 0 ) =  (N \ge k ) .
$$
The equality in distribution $M \ed N$ then implies
$$
\mathbb{P}( \Delta_k \le 0 ) =  \mathbb{P}( \bar{Z}_{(k)} \le 0 ).
$$
If the sequence $\{Z_i\}$ is modified to $\{Z_i - z\}$ for some fixed $z$, the modified sequence 
is exchangeable, and the values of $\Delta_k$ and $\bar{Z}_{(k)}$ for the modified sequence 
are just $\Delta_k - z$ and $\bar{Z}_{(k)} - z$. 
Applying the above identity to the modified sequence gives
$$
\mathbb{P}( \Delta_k \le z ) =  \mathbb{P}( \Delta_k - z \le 0 ) =  \mathbb{P}( \bar{Z}_{(k)} - z \le 0 )  = \mathbb{P}( \bar{Z}_{(k)} \le z).
$$
So $\Delta_k$ and $\bar{Z}_{(k)}$ have the same cumulative distribution function, hence the 
same distribution.  
\end{proof}

The proof of Theorem \ref{thm-main} has a straightforward generalization to the setting where 
$S:[0,1]\to\R$ is a continuous-time stochastic process. Knight \cite{knight1996uniform} showed 
that the analogous distributional identity $M\ed N$ holds when $S$ has exchangeable 
increments and $S(0) = 0$. Hence, by a similar proof, we find that the slope $\Delta(p)$ of the 
greatest convex minorant of $S$ at time $p\in [0,1]$ has the same distribution as the 
$p^\text{th}$ percentile point of the occupation measure for the process 
$(\frac{S(t)}{t}, 0\le t\le 1)$. We record this result as the following corollary.

\begin{corollary}\label{cor-cts-time} Let $S$ denote a real-valued c\`adl\`ag stochastic process 
on $[0,1]$ with exchangeable increments, such that $S(0) = 0$. Define $\Delta(t)$ as the slope 
of the greatest convex minorant of $S$ at $t$, and let $F:\R\to [0,1]$ denote the (random) cdf 
associated with the occupation measure of $(\frac{S(t)}{t}, 0\le t\le 1)$,
\begin{align}\label{eq-occ-cdf}
F(x) = \lambda(\{t\in [0,1] : S(t)\le tx\}), 
\end{align}
 where $\lambda$ denotes Lebesgue measure. Then 
\begin{align}
\Delta(p) 
= \inf_{p\le v\le 1}\sup_{0\le u< p}\frac{S(v)-S(u)}{v-u}
\ed F^{-1}(p)
\end{align}
marginally for all $p\in [0,1]$. 
\end{corollary}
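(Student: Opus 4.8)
The plan is to transcribe the discrete argument of Theorem~\ref{thm-main} into continuous time, replacing the Sparre--Andersen identity of Kallenberg by Knight's continuous-time analogue. Write $M$ for the location of the infimum of $S$ on $[0,1]$ (the time at which the slope of the GCM switches from non-positive to positive) and let $N := \lambda(\{t\in[0,1] : S(t)\le 0\})$ be the occupation time of $S$ below zero. Knight~\cite{knight1996uniform} shows $M\ed N$ whenever $S$ has exchangeable increments and $S(0)=0$, and this is the only probabilistic input; everything else is deterministic bookkeeping.

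Next I would establish the two pathwise event identities that drive the proof. Because the GCM is convex, its slope $\Delta(\cdot)$ is non-decreasing, and since the horizontal line through the infimum value touches the GCM exactly at $M$, the slope is non-positive on $[0,M]$ and positive thereafter; hence $(\Delta(p)\le 0)=(M\ge p)$ for each fixed $p$. On the occupation side, observe $F(0)=\lambda(\{t:S(t)\le 0\})=N$, and for the generalized inverse $F^{-1}(p)=\inf\{x:F(x)\ge p\}$ (legitimate since $F$ in \eqref{eq-occ-cdf} is right-continuous in $x$) one has $(F^{-1}(p)\le 0)=(F(0)\ge p)=(N\ge p)$. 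Combining these with $M\ed N$ gives $\mathbb{P}(\Delta(p)\le 0)=\mathbb{P}(M\ge p)=\mathbb{P}(N\ge p)=\mathbb{P}(F^{-1}(p)\le 0)$.

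To upgrade equality of the two laws at the single level $0$ to equality of the full marginal laws, I would run the same translation trick as in the discrete case. Replacing $S(t)$ by $S(t)-tz$ subtracts a deterministic linear drift; by Kallenberg's representation of processes with exchangeable increments this preserves exchangeability of the increments, while shifting $\Delta(p)$ to $\Delta(p)-z$ and, since $\{t:S(t)-tz\le tx\}=\{t:S(t)\le t(x+z)\}$, shifting $F^{-1}(p)$ to $F^{-1}(p)-z$. Applying the level-$0$ identity to the modified process then yields $\mathbb{P}(\Delta(p)\le z)=\mathbb{P}(F^{-1}(p)\le z)$ for every $z\in\R$, so $\Delta(p)$ and $F^{-1}(p)$ share a cdf and therefore a distribution.

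The main obstacle is the careful pathwise analysis underlying the event identity $(\Delta(p)\le 0)=(M\ge p)$ in the c\`adl\`ag setting: the infimum may be attained on a non-degenerate interval or approached across a jump, so $M$ must be pinned down (e.g.\ as the last time the infimum is attained, matching Knight's convention) and one must verify that the GCM slope genuinely changes sign there. One should also confirm that the boundary events $\{M=p\}$ and $\{N=p\}$, together with the atoms and flat stretches of the random $F$, affect the compared cdfs only at countably many points, which is harmless for a distributional identity, and that the generalized-inverse identity $(F^{-1}(p)\le 0)=(N\ge p)$ survives these irregularities.
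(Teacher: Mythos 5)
Your proposal is correct and follows exactly the route the paper intends: the paper gives no separate proof of this corollary, only the remark that one substitutes Knight's continuous-time identity $M\ed N$ for the Sparre--Andersen lemma and repeats the argument of Theorem~\ref{thm-main} (the event identities at level $0$ plus the linear-drift translation), which is precisely what you do. Your added care about the c\`adl\`ag pathwise issues is a sensible refinement of the same argument, not a different approach.
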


See Abramson et al. \cite{abramson2011convex} for a general study of convex minorants of 
random walks and processes with exchangeable increments. In the special cases where $S$ is 
a standard Brownian motion or Brownian bridge on the unit interval, Carolan \& Dykstra 
\cite{MR1891744} derive the distribution of the slope $\Delta(p)$, jointly with the process $S(p)$ 
and its convex minorant at $p$, for a fixed value $p\in [0,1]$. Given our corollary, their explicit 
formula for the slope $\Delta(p)$ provides the distribution of $F^{-1}(p)$, giving new information 
about the occupation measure of $(\frac{S(t)}{t}, 0\le t\le 1)$ for Brownian motion and Brownian 
bridge. The distribution of the $p^\text{th}$ percentile point of the occupation measure for 
$(S(t), 0\le t\le 1)$ has been obtained under the same generality as Corollary \ref{cor-cts-time}: 
see the introduction of Dassios \cite{dassios2005quantiles} and references therein.

\section{Consequences for Isotonic Regression}\label{sec-consequences}

Since the identity of Theorem \ref{thm-main} holds marginally, it allows us to simplify 
expectations of functions that are additive in the components of $\Pi_{\cm^n}(Z)$. 
As long as $Z$ is exchangeable,
\begin{align}\label{eq-general-corollary}
\sum_{k=1}^n\EE h((\Pi_{\cm^n}(Z))_k)
=\sum_{k=1}^n\EE  h(\bar{Z}_{(k)})
=\sum_{k=1}^n\EE  h(\bar{Z}_{k}).
\end{align}
Taking $h(x) = |x|^p$, we obtain our first corollary.

\begin{corollary}
Suppose $Z = (Z_1,\dots,Z_n)$ is exchangeable. For $p > 0$,
\begin{align}\label{eq-thm}
\EE\|\Pi_{\cm^n}(Z)\|_p^p
&= \sum_{k=1}^n \EE\left|\frac{1}{k}\sum_{i=1}^k Z_i\right|^p,
\end{align}
provided $\EE|Z_1|^p < \infty$.
\end{corollary}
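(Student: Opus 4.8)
The plan is to deduce the corollary directly from Theorem~\ref{thm-main} by specializing the additive identity~\eqref{eq-general-corollary} to the function $h(x) = |x|^p$. First I would expand the $p$-norm coordinatewise: by definition $\|\Pi_{\cm^n}(Z)\|_p^p = \sum_{k=1}^n |(\Pi_{\cm^n}(Z))_k|^p$, so taking expectations and using linearity gives
\begin{equation*}
\EE\|\Pi_{\cm^n}(Z)\|_p^p = \sum_{k=1}^n \EE\,|(\Pi_{\cm^n}(Z))_k|^p = \sum_{k=1}^n \EE\,|\Delta_k|^p,
\end{equation*}
where the last equality is the identification $\Pi_{\cm^n}(Z) = \Delta$ furnished by Lemma~\ref{lem-characterization}.

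Next I would invoke the marginal distributional identity of Theorem~\ref{thm-main}. Since $\Delta_k \ed \bar{Z}_{(k)}$ and $x \mapsto |x|^p$ is a nonnegative measurable function, we have $\EE\,|\Delta_k|^p = \EE\,|\bar{Z}_{(k)}|^p$ for each $k$; this holds as an identity in $[0,\infty]$ regardless of integrability, because it only uses equality in distribution of a single coordinate. Summing over $k$ yields the first equality in~\eqref{eq-general-corollary}. The second equality is a pointwise reordering: the order statistics $\bar{Z}_{(1)}, \dots, \bar{Z}_{(n)}$ are a permutation of the running averages $\bar{Z}_1, \dots, \bar{Z}_n$, so $\sum_{k=1}^n |\bar{Z}_{(k)}|^p = \sum_{k=1}^n |\bar{Z}_k|^p$ holds for every realization, and taking expectations gives $\sum_{k=1}^n \EE\,|\bar{Z}_{(k)}|^p = \sum_{k=1}^n \EE\,|\bar{Z}_k|^p$. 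Chaining the displays proves~\eqref{eq-thm}.

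The only point requiring care, and the main (if modest) obstacle, is finiteness: I must check that the hypothesis $\EE\,|Z_1|^p < \infty$ makes both sides of~\eqref{eq-thm} finite, separately across the two regimes of $p$. For $p \ge 1$, convexity of $|\cdot|^p$ and Jensen's inequality give $|\bar{Z}_k|^p \le \frac1k \sum_{i=1}^k |Z_i|^p$, whence by exchangeability $\EE\,|\bar{Z}_k|^p \le \EE\,|Z_1|^p < \infty$. For $0 < p < 1$ the function $|\cdot|^p$ is subadditive, so $\bigl|\sum_{i=1}^k Z_i\bigr|^p \le \sum_{i=1}^k |Z_i|^p$ and hence $\EE\,|\bar{Z}_k|^p \le k^{1-p}\,\EE\,|Z_1|^p < \infty$. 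In either case every summand on the right is finite, so the total is finite and the distributional identities above pass from $[0,\infty]$ to genuine finite equalities. No further work is needed, since the substantive content has already been established in Theorem~\ref{thm-main}.
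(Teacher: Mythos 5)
Your proof is correct and follows the same route as the paper, which simply specializes the additive identity \eqref{eq-general-corollary} to $h(x)=|x|^p$; your coordinatewise expansion, use of the marginal identity $\Delta_k \ed \bar{Z}_{(k)}$, and the permutation argument for the order statistics are exactly what that display encodes. The finiteness checks via Jensen (for $p\ge 1$) and subadditivity (for $0<p<1$) are a reasonable extra justification of the integrability hypothesis that the paper leaves implicit.
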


\begin{remark}
Viewed through its graphical representation, $\Delta_k = C(k) -C(k-1)$ is the left-derivative of 
the GCM $C$ at $k$, so when the power $p=1$, equation \eqref{eq-thm} yields the discrete 
arc-length formula
\begin{align}
\sum_{k=1}^n\EE |C(k)-C(k-1)| 
&= \EE\|\Pi_{\cm^n}(Z)\|_1
= \sum_{k=1}^n \frac{1}{k}\EE|S_k|
\end{align}
Closely related to this formula is the identity of Spitzer \& 
Widom~\cite{spitzer1961circumference}, which takes $\tilde{Z}_1,\dots,\tilde{Z}_n$ to be a 
sequence of i.i.d. random variables in $\R^2$ (or the complex plane $\mathbb{C}$) with finite 
variance. If $\tilde{S}_k = \sum_{i=1}^k \tilde{Z}_i$ is the partial sum and $\tilde{L}_n$ is the 
length of the perimeter of the convex hull $\text{conv}(0, \tilde{S}_1,\dots, \tilde{S}_n)$, then
\begin{align}
\EE \tilde{L}_n
&= 2\sum_{k=1}^n \frac{1}{k}\EE\|\tilde{S}_k\|.
\end{align}
These formulas connect the geometry of the convex hull of a random walk to the magnitudes 
of the running means. 
\end{remark}

Consider the case when $p = 2$. Since $Z$ is exchangeable, every pair of components has the 
same correlation $\rho$. If we further assume $Z_1$ has zero mean and unit variance, the right 
hand side of equation~\eqref{eq-thm} can be computed explicitly 
\begin{align*}
\EE\left(\frac{1}{k}\sum_{i=1}^kZ_i\right)^2
=\rho + \frac{1-\rho}{k}.
\end{align*}
Summing over $k$ yields our next result.
\begin{corollary}\label{cor-stat-dim} Suppose $Z\sim \mu$ is an exchangeable random vector 
with zero mean, unit variance, and pairwise correlation~$\rho$. Then
\begin{align*}
\delta_n(\mu)
 = \rho n + (1-\rho)H_n.
\end{align*}
\end{corollary}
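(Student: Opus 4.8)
The plan is to reduce the claim to the preceding corollary together with a one-line second-moment computation. First I would recall that, by the definition of $\delta_n(\mu) = \EE_{Z\sim\mu}\|\Pi_{\cm^n}(Z)\|_2^2$, the quantity we must evaluate is exactly the left-hand side of equation~\eqref{eq-thm} in the case $p=2$. Since $Z$ is exchangeable, that corollary applies and gives
\[
\delta_n(\mu) = \sum_{k=1}^n \EE\,\bar{Z}_k^2,
\]
so the whole problem collapses to evaluating the second moment of each running average $\bar{Z}_k$.

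Next I would compute $\EE\,\bar{Z}_k^2$ directly by expanding the square:
\[
\EE\,\bar{Z}_k^2 = \frac{1}{k^2}\left(\sum_{i=1}^k \EE\,Z_i^2 + \sum_{i\neq j}\EE\,Z_iZ_j\right).
\]
By exchangeability every $Z_i$ shares the common variance (here equal to $1$) and every distinct pair $(Z_i,Z_j)$ shares a common covariance, which equals the correlation $\rho$ precisely because the variances are normalized to $1$. Hence the first sum contributes $k$ and the second contributes $k(k-1)\rho$, giving $\EE\,\bar{Z}_k^2 = \rho + \frac{1-\rho}{k}$, exactly the identity recorded just above the statement.

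Finally I would sum over $k$: the constant term $\rho$ contributes $\rho n$, while the terms $\frac{1-\rho}{k}$ sum to $(1-\rho)H_n$ by the definition $H_n = 1 + \frac12 + \cdots + \frac1n$. This yields $\delta_n(\mu) = \rho n + (1-\rho)H_n$, as desired. I do not expect any genuine obstacle, since the substantive work is already carried by Theorem~\ref{thm-main} and its $|x|^p$ corollary; the only point requiring care is verifying that the pairwise covariance is constant and equal to $\rho$, which is exactly where exchangeability combined with the unit-variance normalization is used.
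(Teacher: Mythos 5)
Your proposal is correct and follows exactly the paper's own argument: apply the $p=2$ case of equation~\eqref{eq-thm}, compute $\EE\,\bar{Z}_k^2 = \rho + \frac{1-\rho}{k}$ from exchangeability and the unit-variance normalization, and sum over $k$. No differences worth noting.
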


This result should be contrasted with other distribution-free identities, namely
\begin{align*}
\EE\|Z\|_2^2 = n
\text{ and }
\EE\|\bar{Z}_n{\bf 1}_n\|_2^2 = 1,
\end{align*}
provided $Z$ has i.i.d. components with zero mean and unit variance. In particular, suppose we 
observe $Y = \theta^* + \sigma Z$ where $Z$ has i.i.d. components with zero mean and unit 
variance, but it turns out that $\theta^* = c{\bf 1}_n$ is constant. 
If we know $\theta^*$ is constant, we can estimate it by a constant
sequence $\bar{Y}{\bf 1}_n$ and pay a constant price $\sigma^2$ in
total risk. If we know nothing about the structure of $\theta^*$ and
use $\hat\theta = Y$, the risk $\sigma^2 n$ is quite large by
comparison. The monotone sequence estimate resides in the middle, with
a much smaller risk of $\sigma^2H_n$ and knowledge only about the relative
order. We explained in Section~\ref{sec-intro} how risk calculations
when $\theta^* = 0$ generalize to MSE bounds that are sharp in the low
noise limit for arbitrary $\theta^*$. For example, when
  $\theta^* \in \mathcal{M}^n$ has $k$ constant pieces, then
  \eqref{eq-sharpmse}, Corollary \ref{cor-stat-dim} and the fact that
  $H_l \leq \log(el)$ for every $l \geq 1$ imply that  
  \begin{equation*}
    R(\hat{\theta}, \theta^*) \leq \frac{k\sigma^2}{n}
    \log\left(\frac{en}{k} \right)
  \end{equation*}
  whenever $Z_1, \dots, Z_n$ are i.i.d. with mean zero and unit
  variance. Also if $\theta^* \in \R^n$ is not necessarily in
  $\mathcal{M}^n$, then Corollary \ref{cor-stat-dim}, together with
  the results of \cite{bellec2018sharp}, implies that
  \begin{equation*}
    R(\hat{\theta}, \theta^*) \leq \inf_{\theta \in \mathcal{M}^n}
    \left(\frac{1}{n} \|\theta - \theta^*\|^2 + \frac{\sigma^2 k(\theta)}{n}
    \log\left(\frac{en}{k(\theta)} \right) \right),
  \end{equation*}
where $k(\theta)$ is the number of constant pieces of the vector $\theta$. 
These formulae (with the leading constant of 1 in front of the
$\frac{k\sigma^2}{n} \log \frac{en}{k}$ term on the right
  hand side) were previously only known when the distribution of $Z_1,
  \dots, Z_n$ was standard Gaussian.

Define the $L^p$-risk of the isotonic LSE
\begin{align*}
R^{(p)}(\hat\theta,\theta^*)
=\frac{1}{n}\EE\|\hat\theta-\theta^*\|_p^p
\end{align*}
so that $R(\hat\theta,\theta^*) = R^{(2)}(\hat\theta,\theta^*)$. We can similarly employ 
Theorem~\ref{thm-main} to explicitly calculate the $L^p$-risk of the isotonic LSE $\hat\theta$ 
when $\theta^*$ is constant and $Z$ is Gaussian:

\begin{corollary}\label{cor-lp} Suppose $Z\sim\cn(0, I_n)$. Then for any $p> 0$,
$$
\EE\|\Pi_{\cm^n}(Z)\|_p^p 
= H_{n, p/2}\EE|Z_1|^p
= H_{n, p/2}\sqrt{\frac{2^{p}}{\pi}}\Gamma\left(\frac{p+1}{2}\right),
$$
where $H_{n,m} = \sum_{k=1}^n \frac{1}{k^m}$.
\end{corollary}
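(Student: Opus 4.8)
The plan is to apply the general additive identity \eqref{eq-general-corollary} with $h(x) = |x|^p$, which reduces the problem to evaluating the sum of absolute $p$-th moments of the running averages of a standard Gaussian sequence. Since $Z\sim\cn(0,I_n)$ is i.i.d.\ and hence exchangeable, Theorem~\ref{thm-main} applies, and \eqref{eq-thm} from the preceding corollary gives
\[
\EE\|\Pi_{\cm^n}(Z)\|_p^p = \sum_{k=1}^n \EE\left|\frac{1}{k}\sum_{i=1}^k Z_i\right|^p.
\]
The key structural fact I would use next is that for a standard Gaussian sequence, the running average $\bar Z_k = \frac{1}{k}\sum_{i=1}^k Z_i$ is itself Gaussian with mean $0$ and variance $1/k$, so $\bar Z_k \ed k^{-1/2} Z_1$ in distribution.

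First I would record the scaling: since $\bar Z_k \sim \cn(0, 1/k)$, we have $\bar Z_k \ed k^{-1/2}\, Z_1$, and therefore $\EE|\bar Z_k|^p = k^{-p/2}\,\EE|Z_1|^p$. Substituting this term by term into the sum gives
\[
\sum_{k=1}^n \EE|\bar Z_k|^p = \EE|Z_1|^p \sum_{k=1}^n \frac{1}{k^{p/2}} = H_{n,p/2}\,\EE|Z_1|^p,
\]
which is exactly the first claimed equality, using the definition $H_{n,m}=\sum_{k=1}^n k^{-m}$ with $m = p/2$.

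For the second equality I would compute the absolute $p$-th moment of a standard Gaussian explicitly. Writing $\EE|Z_1|^p = \frac{1}{\sqrt{2\pi}}\int_{-\infty}^\infty |x|^p e^{-x^2/2}\,dx = \sqrt{\frac{2}{\pi}}\int_0^\infty x^p e^{-x^2/2}\,dx$, the substitution $u = x^2/2$ (so $x = \sqrt{2u}$, $dx = (2u)^{-1/2}\,du$) turns the integral into a Gamma integral, yielding $\EE|Z_1|^p = \frac{2^{p/2}}{\sqrt{\pi}}\,\Gamma\!\left(\frac{p+1}{2}\right)$. Combining this with the factor $H_{n,p/2}$ and rewriting $2^{p/2}=\sqrt{2^p}$ gives the stated closed form $H_{n,p/2}\sqrt{2^p/\pi}\,\Gamma\!\left(\frac{p+1}{2}\right)$.

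This proof is essentially a direct application of the earlier corollary followed by two elementary computations, so there is no real obstacle; the only point requiring minor care is the Gaussian moment integral and the bookkeeping of the substitution that produces the Gamma function and the correct power of $2$. One should also note in passing that the interchange implicit in \eqref{eq-general-corollary} is justified here because all moments $\EE|Z_1|^p$ are finite for Gaussian noise, so the finiteness hypothesis of the preceding corollary holds automatically for every $p>0$.
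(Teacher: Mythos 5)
Your proof is correct and follows essentially the same route as the paper: apply the running-average identity \eqref{eq-thm} (i.e.\ Theorem~\ref{thm-main}), use $\bar Z_k\sim\cn(0,1/k)$ to get $\EE|\bar Z_k|^p=k^{-p/2}\EE|Z_1|^p$, and evaluate the Gaussian absolute moment as $2^{p/2}\Gamma\bigl(\tfrac{p+1}{2}\bigr)/\sqrt{\pi}$. The paper simply states the moment formula without the Gamma-integral derivation you supply; the content is identical.
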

\begin{proof} Note 
$\EE\left|\frac{1}{k}\sum_{i=1}^kZ_i\right|^p = \left(\frac{2}{k}\right)^{p/2}\frac{\Gamma\left(\frac{p+1}{2}\right)}{\sqrt{\pi}}$ 
and apply the theorem.
\end{proof}

Corollary \ref{cor-lp} should similarly be contrasted with the following identities when 
$Z\sim \cn(0, I_n)$ :
$$
\EE\|Z\|_p^p = n\EE|Z_1|^p \text{ and } \EE\|\bar{Z}{\bf 1}_n\|_p^p = n^{1-p/2}\EE|Z_1|^p
$$
respectively. In particular, when $p > 2$, the bound 
$H_{n, p/2} < \sum_{k=1}^\infty \frac{1}{k^{p/2}} < \infty$ holds for all $n$, which is to say 
$\EE\|\Pi_{\cm^n}(Z)\|_p^p$ is bounded when $p > 2$ whereas $\EE\|Z\|_p^p$ grows without 
bound as $n$ grows. 

When $\theta^*$ is constant and $Z\sim \cn(0, I_n)$, the $L^p$ risk of isotonic regression is
\begin{align}\label{eq-gaussian-lp}
R^{(p)}(\hat\theta,\theta^*)
&=\frac{H_{n,p/2}}{n}\sigma^p\EE|Z_1|^p .
\end{align}
When $1\le p\le 2$, Theorem 2.3 of Zhang~\cite{zhang2002risk} shows an asymptotic result for 
the $L^p$ risk on constant $\theta^*$ that agrees with equation \eqref{eq-gaussian-lp}. 

The continuous-time distributional identity in Corollary \ref{cor-cts-time} applies to the 
asymptotic distribution of the isotonic least squares estimator. A standard model for studying the 
asymptotic behavior of isotonic regression is
\begin{align*}
\theta^*_k &= f^*\left(\frac{k}{n}\right) 
\end{align*}
where $f^* : [0,1]\to \R$ is non-decreasing. We observe $Y$, a noisy version of $\theta^*$, and 
calculate $\hat\theta$ by projecting $Y$ onto the monotone cone. The function estimate $\hat{f}$
 is defined by $\hat{f}\left(\frac{k}{n}\right) = \hat\theta_k$ and linearly interpolated between 
 design points. Here, as before, the dependence on $n$ in $\theta^*\in \cm^n$ is suppressed, 
 but now we are interested in the behavior of isotonic least squares $\hat{f}(p)$ at a fixed point 
 $p\in[0,1]$ as $n\to \infty$. 

Define the partial sum process $S^{(n)} : [0,1]\to \R$ by 
$S^{(n)}(k/n) = \frac{Y_1+\cdots+Y_k}{\sqrt{n}}$, linearly interpolated between design points. 
When the function $f^*\threequals c$ is constant, the quantity $$\sqrt{n}(\hat{f}(p)-f^*(p))$$  is 
given by the left-derivative of the greatest convex minorant of $S^{(n)}$ at $p$. By the 
invariance principle, this converges in distribution to the left-derivative of the greatest convex 
minorant of standard Brownian motion $B = (B(t), 0\le t\le 1)$ at $t_0$. This asymptotic result is 
well known and a similar result was noted for the Grenander estimator in Carolan \& Dykstra 
\cite{carolan1999asymptotic}, where Brownian motion is replaced with a Brownian bridge. 
Corollary \ref{cor-cts-time} relates this asymptotic distribution to the percentile points of the 
occupation measure for $(\frac{B(t)}{t}, 0\le t\le 1)$. 

Finally, Corollary \ref{cor-stat-dim} on the projection onto $\cm^n$ extends over to that of the 
set of non-negative monotone sequences $\cm^n_+ = \cm^n\cap \R^n_+$. Theorem 1 of 
N\'emeth \& N\'emeth \cite{nemeth2012project} observes that the projection of $Z$ onto 
$\cm_+^n$ is given by $\Pi_{\cm_+^n}(Z) = \Pi_{\cm^n}(Z)_+$, the element-wise positive part of 
the projection onto $\cm^n$. Hence the distributional identity Theorem \ref{thm-main} yields a 
similar set of identities for non-negative isotonic regression.
\begin{corollary}
For any exchangeable noise vector $Z$, 
\begin{align}
(\Pi_{\cm^n_+}(Z))_k
\ed (\bar{Z}_{(k)})_+
\end{align}
Provided $\EE|Z_i|^p < \infty$,
\begin{align}\label{eq-nnmonotone}
\EE\|\Pi_{\cm^n_+}(Z)\|_p^p
&= \sum_{k=1}^n \EE\left(\frac{1}{k}\sum_{i=1}^k Z_i\right)_+^p,
\end{align}
Furthermore, if $Z$ is symmetric with unit variance, the generalized statistical dimension of the 
monotone cone is
\begin{align}\label{eq-nnmonotone-dim}
\EE\|\Pi_{\cm^n_+}(Z)\|_2^2 = \frac{\rho n + (1-\rho)H_n}{2},
\end{align}
where $\rho$ is the pairwise correlation.
\end{corollary}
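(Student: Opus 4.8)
The plan is to reduce everything to the projection onto the full monotone cone $\cm^n$, for which Theorem~\ref{thm-main} and its additive-functional consequence~\eqref{eq-general-corollary} are already available. The bridge is the identity $\Pi_{\cm^n_+}(Z) = \Pi_{\cm^n}(Z)_+$ of N\'emeth \& N\'emeth \cite{nemeth2012project} quoted above, which says that projecting onto the non-negative monotone cone is the same as taking the entrywise positive part of the projection onto $\cm^n$. In the notation $\Delta_k = (\Pi_{\cm^n}(Z))_k$ of Section~\ref{sec-main}, this reads $(\Pi_{\cm^n_+}(Z))_k = (\Delta_k)_+$ for each $k$.

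First I would prove the distributional identity. Since $x\mapsto x_+$ is a fixed measurable (indeed continuous) map, applying it to both sides of the marginal identity $\Delta_k \ed \bar{Z}_{(k)}$ from Theorem~\ref{thm-main} gives $(\Delta_k)_+ \ed (\bar{Z}_{(k)})_+$, which is exactly the claimed $(\Pi_{\cm^n_+}(Z))_k \ed (\bar{Z}_{(k)})_+$. Next, for the $L^p$ formula I would take $h(x) = (x_+)^p$ in~\eqref{eq-general-corollary}. Because $\|\cdot\|_p^p$ is additive across coordinates, I can write $\|\Pi_{\cm^n_+}(Z)\|_p^p = \sum_{k} (\Delta_k)_+^p = \sum_k h((\Pi_{\cm^n}(Z))_k)$; invoking~\eqref{eq-general-corollary} — whose second equality merely reshuffles the order statistics $\bar{Z}_{(k)}$ back into the running averages $\bar{Z}_k$ — then yields $\EE\|\Pi_{\cm^n_+}(Z)\|_p^p = \sum_{k=1}^n \EE(\bar{Z}_k)_+^p$, establishing~\eqref{eq-nnmonotone}. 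The hypothesis $\EE|Z_i|^p < \infty$ is what legitimizes passing the expectation through.

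For the statistical-dimension formula~\eqref{eq-nnmonotone-dim} I would specialize to $p = 2$ and exploit symmetry. When $Z$ is symmetric, i.e.\ $Z \ed -Z$, each running average satisfies $\bar{Z}_k \ed -\bar{Z}_k$, so $\bar{Z}_k$ is symmetric about $0$; writing $\bar{Z}_k^2 = (\bar{Z}_k)_+^2 + (\bar{Z}_k)_-^2$ with $(\bar{Z}_k)_+ \ed (\bar{Z}_k)_-$ then forces $\EE(\bar{Z}_k)_+^2 = \tfrac12\EE\bar{Z}_k^2$. Summing over $k$ and using the moment computation $\EE\bar{Z}_k^2 = \rho + \frac{1-\rho}{k}$ already recorded before Corollary~\ref{cor-stat-dim} produces $\tfrac12(\rho n + (1-\rho)H_n)$, which is exactly half of $\delta_n(\mu)$, as claimed.

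I expect no serious obstacle: the argument is a direct chaining of the N\'emeth--N\'emeth identity, Theorem~\ref{thm-main}, and symmetry. The one point requiring care is that Theorem~\ref{thm-main} is only a \emph{marginal} identity, so the $L^p$ computation must pass through the additive functional~\eqref{eq-general-corollary} rather than any joint distributional statement; and the factor $\tfrac12$ in~\eqref{eq-nnmonotone-dim} hinges precisely on the symmetry of each $\bar{Z}_k$, which is why that stronger hypothesis (rather than mere exchangeability with unit variance) is imposed there.
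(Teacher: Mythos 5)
Your proposal is correct and follows essentially the same route as the paper: the N\'emeth--N\'emeth identity $\Pi_{\cm^n_+}(Z)=\Pi_{\cm^n}(Z)_+$ reduces everything to Theorem~\ref{thm-main}, the choice $h(x)=(x)_+^p$ in~\eqref{eq-general-corollary} gives~\eqref{eq-nnmonotone}, and the symmetry argument $\EE(\bar{Z}_k)_+^2=\tfrac12\EE\bar{Z}_k^2$ yields~\eqref{eq-nnmonotone-dim}. If anything, you are slightly more careful than the paper in noting that the symmetry needed is that of the vector $Z\ed -Z$ (so that each $\bar{Z}_k$ is symmetric), not just of the marginals.
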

\begin{proof} Equation~\eqref{eq-nnmonotone} follows from equation 
\eqref{eq-general-corollary} by taking $h(x) = (x)_+^p$. When $Z_i\stackrel{d}{=}-Z_i$ is 
symmetric with unit variance,
\begin{align*}
\EE\left(\frac{1}{k}\sum_{i=1}^kZ_i\right)_+^2 
=\frac{1}{2}\EE\left(\frac{1}{k}\sum_{i=1}^kZ_i\right)^2 
= \frac{1}{2}\left(\rho + \frac{1-\rho}{k}\right).
\end{align*}
Summing over $k$ yields equation~\eqref{eq-nnmonotone-dim}.
\end{proof}
Equation~\eqref{eq-nnmonotone-dim} is also shown in Amelunxen et 
al.~\cite{amelunxen2014living} in the special case $Z\sim \cn(0, I_n)$ using the theory of finite 
reflection groups. The identity~\eqref{eq-nnmonotone} allows us to show 
equation~\eqref{eq-nnmonotone-dim} for a much wider variety of noise vectors, and as before 
also allows us to obtain relations for the expected $L^p$ norms of the projection of the noise 
vector. All of our exact formulae follow from the distributional identity in Theorem \ref{thm-main}, 
which exploits the geometric characterization of the isotonic LSE in Lemma 
\ref{lem-characterization}. An interesting open question is whether similar characterizations---
such as for convex regression \cite{groeneboom2001estimation}---may yield exact non-asymptotic 
risk calculations in other shape-constrained estimation problems. 

\bibliographystyle{plain}
\bibliography{main.bbl}

\end{document}